\definecolor{codegreen}{rgb}{0, 0.6, 0}
\definecolor{codegray}{rgb}{0.5, 0.5, 0.5}
\definecolor{codepurple}{rgb}{0.58, 0, 0.82}
\definecolor{backcolour}{rgb}{0.95, 0.95, 0.92}
\lstdefinelanguage{Macaulay2}
{
xleftmargin=.2in, 
xrightmargin=.2in, 
basicstyle={\footnotesize\ttfamily}, 
keywordstyle={\color{blue}}, 
commentstyle={\color{codegreen}}, 
stringstyle={\color{red!40!black}}, 
rulecolor=\color{yellow}, 
basewidth={1.2ex}, 
sensitive=false, 
morecomment=[l]{--}, 
morecomment=[s]{-*}{*-}, 
morestring=[b]", 
escapechar={`}, 
escapebegin={\rmfamily}, 
morekeywords={Sequence, QQ, saturate, load, random, degree, genus, topComponents, ideal, Ext, minors, quotient, intersect, map, kernel, preimage, codim, sheaf, matrix, hilbertPolynomial, Projective, false, sheafExt, ann, cooker, flatten, gens, entries, basis, apply}
}
\newcommand{\extp}{{\textstyle \bigwedge}}
\newcommand{\nwc}{\newcommand}
\newcommand{\p}[1]{{\mathbb{P}^{#1}}}
\newcommand{\opn}{{\mathcal O}_{\mathbb{P}^{n}}}
\newcommand{\pn}{{\mathbb{P}^{n}}}
\newcommand{\tp}[1]{{\rm T}{\mathbb{P}^{#1}}}
\newcommand{\tpn}{{\rm T}{\mathbb{P}^{n}}}
\newcommand{\om}[2]{\Omega_{\mathbb{P}^{#2}}^{#1}}
\newcommand{\omn}[1]{\Omega_{\mathbb{P}^{n}}^{#1}}
\newcommand{\tD}{{\rm T}_\mathscr{D}}
\newcommand{\nD}{{\rm N}_\mathscr{D}}
\newcommand{\sD}{\mathscr{D}}
\nwc{\C}{\mathbb{C}}
\nwc{\K}{ k }
\nwc{\dd}{\mathcal{D}}
\DeclareMathOperator{\codim}{{codim}}
\DeclareMathOperator{\Tor}{Tor}
\DeclareMathOperator{\sing}{Sing}
\DeclareMathOperator{\rad}{rad}
\newcommand{\PP}{\mathbb{P}}
\newcommand{\sI}{\mathscr{I}}
\newtheorem{coro}{Corollary}[section]
\newtheorem{lema}[coro]{Lemma}
\newtheorem{prop}[coro]{Proposition}
\theoremstyle{definition}
\newtheorem{exem}[coro]{Example}
\newtheorem{rem}[coro]{Remark}
\newenvironment{thmintro}[1]
{\innerthmintro}
{\endinnerthmintro}
\author{Alan Muniz}
\address{Universidade Estadual de Campinas (UNICAMP) \\ Instituto de Matemática, Estatística e Computação Científica (IMECC) \\ Departamento de Matem\'atica \\
Rua S\'ergio Buarque de Holanda, 651\\ 13083-970 Campinas-SP, Brazil}
\curraddr{Departamento de Matem\'atica \\ Centro de Ci\^encias Exatas e da Natureza \\ Universidade Federal de Pernambuco \\ Recife - PE, CEP 50740-560, Brazil }
\email{alan.nmuniz@ufpe.br}
\subjclass[2020]{14D20, 14J60, 14F06, 13D02}
\keywords{Differential forms, Distributions, Singular scheme, Vanishing Locus, Syzygy}
\date{June, 2024}
\title{{\it p}-forms from syzygies}
\begin{document}

\begin{abstract}
These notes aim to develop a tool for constructing polynomial differential $p$-forms vanishing on prescribed loci through syzygies of homogeneous ideals. Examples are provided through implementing this method in Macaulay2, particularly examples of instanton bundles of charges $4$ and $5$ on $\p3$ that arise in this construction. 
\end{abstract}

\maketitle
\section{Introduction}

In Mathematics, being able to compute explicit examples is very important. In particular, when studying distributions on projective spaces, such examples are governed by homogeneous polynomial differential forms which induce exact sequences of the form
\[
0 \longrightarrow F \longrightarrow \tp{n} \stackrel{\omega}{\longrightarrow} N \longrightarrow 0 
\]
where $F$ is a rank $n-p$ reflexive sheaf and $N$ is a rank $p$ torsion-free sheaf. Properties of the vanishing locus of the $p$-form $\omega$ reflect on properties of the sheaves $F$ and $N$; thus providing the $p$-form leads to the understanding of the sheaves. In this direction, we prove the following result. Fix $k$ a field.

\begin{thmintro}{A} \label{mainthm}
Let $Z\subset \PP^n$ a closed subscheme with (saturated) homogeneous ideal $I_Z \subset R = k[x_0, \dots, x_n]$ and let $\mathcal{A}^p(Z)$ be the $R$-module of polynomial differential $p$-forms vanishing on $Z$. Then we have an exact sequence of graded $R$-modules:
\[
0 \longrightarrow I_Z \otimes  \iota_{\rm rad}\bigwedge^{p+1}V^* 
\longrightarrow \mathcal{A}^p(Z) \longrightarrow   \Tor_p^R(I_Z,\K)(p+1) \longrightarrow 0,
\]
where $V^* = \langle dx_0, \dots , dx_n \rangle$ and $\rad$ is the radial vector field (see \eqref{eq:rad}). Moreover, if $(I_Z)_d = 0$ then $\mathcal{A}^p(Z)_d \cong \Tor_p^R(I_Z,\K)_{d+p+1}$. 
\end{thmintro}

The $R$-module $\Tor_p^R(I_Z,\K)$ can be identified with the space of $p$-th syzygies of a minimal set of generators for $I_Z$. Given a (minimal) free resolution 
\[
0 \longrightarrow F_n \stackrel{\phi_n}{\longrightarrow} F_{n-1} \stackrel{\phi_{n-1}}{\longrightarrow} \dots \longrightarrow F_0  \stackrel{\phi_0}{\longrightarrow} I_Z \longrightarrow 0,
\]
we construct a map $\xi_{p} \colon \Tor_p^R(I_Z,\K) \rightarrow \mathcal{A}^p(Z)$ given by differentiating  and combining the entries of $\phi_1, \dots ,\phi_p$, i.e., $\xi_{p}$ depends on syzygies up to order $p$, see Proposition \ref{prop:splitting}.

The use of syzygies to describe distributions goes back at least to the work of Campillo and Olivares \cite{CO:Polarity}, see also \cite{CO:CB} and references therein. In \cite[\S 4]{CJM} the case of $1$-forms is essentially described, serving as a prelude to the present work. Note that for $p=1$ Theorem \ref{mainthm} gives a slightly more complete version of \cite[Proposition 4.5]{CJM}. 

After recalling some relevant concepts in \S 2, we prove in \S 3 Proposition \ref{prop:syzkforms0}, from which Theorem \ref{mainthm} follows. Finally, we provide some examples in \S4. In Example \ref{ex:ch4} we give an example of an instanton bundle of charge $4$ on $\p3$ which is, up to twist, the conormal sheaf of a foliation by curves singular along $5$ disjoint lines; this construction was first observed in \cite{ACM-inst}, though without explicitly referring to foliations. In Example \ref{ch5}, we apply the same construction to produce an instanton of charge $5$, from a foliation by curves singular along a disjoint union of two double lines of genus $-3$, cf. \cite[\S6]{CJM:curves}. Our computations come from implementing these routines in Macaulay2 \cite{M2}. These are compiled in the ancillary file \verb|syz-k-forms.m2|, available at \url{https://github.com/alannmuniz/syz-k-forms.git}

\subsection*{Acknowledgements} This work is supported by INCTmat/MCT/Brazil, CNPq grant number 160934/2022-2, and FAEPEX-Unicamp 2797/23. 
It originates from discussions with Maurício Corrêa, Marcos Jardim, and Renato Vidal, to whom I am grateful. Thanks to an anonymous referee for many important suggestions.

\section{Preliminaries and notation}
We begin by recalling some basic facts and establishing the notation used throughout the paper. Let $\K$ be a field, that we may assume is algebraically closed of characteristic zero. Fix $V$ a $\K$-vector space, $n := \dim V - 1$,  and let $\pn = \PP(V)$ the projective space of lines in $V$ through the origin. Let $\tpn$ and $\Omega^1_\pn$ denote the tangent and cotangent bundles of $\pn$. We have the Euler sequence,
\begin{equation}\label{seq:euler}
    0 \longrightarrow \opn \stackrel{\rad}{\longrightarrow} \opn(1) \otimes V \longrightarrow \tpn \longrightarrow 0.
\end{equation}
Then we may identify $V = H^0(\tpn(-1))$ as the space of constant vector fields (on the affine space $\mathbb{A}^{n+1}_{\K}$). Fixing homogeneous coordinates $(x_0: \dots: x_{n})$, we have that $V$ is spanned by the derivations $\frac{\partial}{\partial x_0}, \dots , \frac{\partial}{\partial x_n}$ so that the map $\rad$ in \eqref{seq:euler} is written as the inclusion of the radial vector field:
\begin{equation}\label{eq:rad}
    \rad = x_0 \frac{\partial}{\partial x_0} + \cdots + x_n \frac{\partial}{\partial x_n},
\end{equation}
which is sometimes called the Euler derivation in the literature. Dualizing \eqref{seq:euler} we get
\begin{equation}\label{seq:eulerdual}
    0 \longrightarrow \omn1 \longrightarrow \opn(-1) \otimes V^\ast \stackrel{\iota_{\rad}}{\longrightarrow} \opn \longrightarrow 0,
\end{equation}
and considering $\{dx_0, \dots, dx_n\}$ the basis of $V^\ast$ dual to $\{\frac{\partial}{\partial x_0}, \dots , \frac{\partial}{\partial x_n}\}$ we see that $\iota_{\rad}$ is the contraction -- or interior product -- of (local) differential $1$-forms  with the radial vector field: $\iota_{\rad}(\omega) = \omega(\rad)$. Furthermore, we take exterior powers of \eqref{seq:eulerdual} to arrive at
\begin{equation}\label{seq:kforms}
    0 \longrightarrow \omn{p} \longrightarrow \opn(-p) \otimes \extp^p V^\ast \stackrel{\iota_{\rad}}{\longrightarrow} \omn{p-1} \longrightarrow 0,
\end{equation}
where $\iota_{\rad}\omega(v_1, \dots, v_{p-1}) = \omega(\rad, v_1, \dots, v_{p-1})$ is the contraction of $p$-forms with $\rad$. 

Note that from \eqref{seq:kforms} we have that global sections of $\omn{p}(d+p+1)$ are in bijection with homogeneous differential $p$-forms
\[
\omega = \sum_{0\leq i_1< \dots < i_p \leq n} A_{i_1\dots i_p } dx_{i_1} \wedge \dots\wedge dx_{i_p } 
\]
satisfying $\iota_{\rad} \omega = 0$ and $\deg A_{i_1\dots i_p } = d+1$.

To fix notation, let $R$ denote the polynomial ring $R = \K[x_0, \dots, x_n]$ and let $\Omega^p_{R} = R \otimes \extp^p V^\ast$ be the free $R$-module of polynomial differential $p$-forms; $\Omega^0_{R} = R$ and $\Omega^{l}_{R} = 0$ for $l<0$.  Then the radial vector field $\rad$ defines a $R$-linear map $\iota_{\rad} \colon \Omega^p_{R} \rightarrow \Omega^{p-1}_{R}$ so that its kernel is $\bigoplus_{r} H^0(\omn{p}(r))$.


\subsection{Distributions} 
Given $1\leq p \leq n-1$, a codimension $p$ distribution $\sD$ on $\pn$ is defined by a short exact sequence
\begin{equation}\label{seq:distkpn}
    0 \longrightarrow \tD \stackrel{\phi}{\longrightarrow} \tpn \stackrel{\psi}{\longrightarrow} \nD \longrightarrow 0 
\end{equation}
such that $\nD$ is a rank $p$ torsion-free sheaf; hence $\tD$ is a rank $n-p$ reflexive sheaf. The distribution $\sD$
is integrable, i.e., a foliation, if $\phi(\tD)$ is closed under the Lie bracket of vector fields. 

Taking exterior powers of \eqref{seq:distkpn} yields a differential $p$-form $\omega \in H^0(\omn{p}(d +p+1))$, where $d := c_1(\tD(-1))$ is called the degree of $\sD$. The map $\psi$ is given by contraction: $\psi(v) = \iota_v\omega$. The coefficients of $\omega$ generate the singular scheme $\sing(\sD)\subset \pn$, supported on the set of points where $\nD$ is not free. As $\nD$ is torsion-free, $\codim \sing(\sD) \geq 2$.

Therefore, to study degree-$d$ codimension-$p$ distributions on $\pn$ we may focus on homogeneous $p$-forms representing global sections of $\omn{p}(d +p+1)$. But first, notice that not every such $p$-form induces a distribution. 

\begin{exem}\label{ex:nonLDS}
Consider the $2$-form $\omega \in H^0(\omn{2}(3))$ given by
\[
\omega = x_0(dx_1\wedge dx_2 + dx_3\wedge dx_4) - dx_0\wedge(x_1dx_2 - x_2dx_1 + x_3dx_4-x_4dx_3).
\]
One can readily check that $\omega$ defines a ``trivial distribution'', $\tD = 0$. Indeed, the contraction map $\iota_\bullet \omega \colon \tpn \rightarrow \Omega_\pn^{p-1}(3)$ is injective. For instance, on the affine chart $U_0 = \{x_0 =1 \}$ we have natural local coordinates $(x_1, x_2, x_3, x_4)$ and, for any local vector field $v = \sum_{j=1}^4 a_j\frac{\partial}{\partial x_j}$, we have
\[
\iota_v\omega|_{U_0} = \iota_v(dx_1\wedge dx_2 + dx_3\wedge dx_4) = a_1dx_2 - a_2dx_1 + a_4dx_3-a_3dx_4 
\]
which vanishes if and only if so does $v$.
\end{exem}

Fortunately, there is a computable characterization for locally decomposable and integrable forms. 

\begin{rem}
To simplify our notation, we set $\extp^0 V^\ast = \extp^0 V = \K$ and $\iota_v\omega := \omega$ for $v\in \extp^0 V$.
\end{rem}

\begin{lema}\label{def:LDS}
A homogeneous $p$-form $\omega$ on $\pn$ is locally decomposable off the singular set (LDS) if 
\[
(\iota_v \omega) \wedge \omega = 0 , \quad \text{ for every } v\in \extp^{p-1}V;
\]
here $\iota_{v_1\wedge \dots \wedge v_{p-1}} \omega := \iota_{v_{p-1}} \cdots \iota_{v_{1}}\omega$. Moreover, a LDS form $\omega$ is integrable if 
\[
(\iota_v \omega) \wedge d\omega = 0 , \quad \text{ for every } v\in \extp^{p-1}V.
\]
\end{lema}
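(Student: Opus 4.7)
The plan is to reduce both halves of the lemma to classical pointwise statements: the Plücker criterion for decomposability of an exterior form, and the Frobenius theorem. Since the conditions in the lemma are $R$-linear in $\omega$, they can be tested fiberwise at every $x \in \pn \setminus \sing(\omega)$ (using the Euler sequence \eqref{seq:kforms} to identify the fiber of $\omn{p}$ at $x$ with a subspace of $\extp^p V^*$), so it suffices to argue pointwise.

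For the LDS characterization, I would invoke the following multilinear-algebra fact: a nonzero $\eta \in \extp^p V^*$ is decomposable if and only if $(\iota_v \eta) \wedge \eta = 0$ for every $v \in \extp^{p-1} V$. The forward direction is immediate, since $\iota_v(\alpha_1 \wedge \cdots \wedge \alpha_p) \in \langle \alpha_1, \dots, \alpha_p \rangle$. For the converse, let $W_\eta := \{\iota_v \eta : v \in \extp^{p-1} V\} \subseteq V^*$; the hypothesis says every element of $W_\eta$ kills $\eta$ under wedge. Choosing a basis of $V^*$ extending a basis of $W_\eta$ and expanding $\eta$ in this basis shows that every monomial occurring in $\eta$ must contain each basis element of $W_\eta$, forcing $\dim W_\eta = p$ and realizing $\eta$ as a scalar multiple of a wedge of a basis of $W_\eta$.

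For the integrability half, I would work locally on $\pn \setminus \sing(\omega)$, where the LDS property yields a decomposition $\omega = \omega_1 \wedge \cdots \wedge \omega_p$ with $\tD = \bigcap_i \ker \omega_i$. By the Frobenius theorem, $\tD$ is integrable iff $d\omega_i \wedge \omega_1 \wedge \cdots \wedge \omega_p = 0$ for each $i$, i.e., $d\omega_i \wedge \omega = 0$. Differentiating $\omega_i \wedge \omega = 0$ gives $d\omega_i \wedge \omega = \omega_i \wedge d\omega$, so this is equivalent to $\omega_i \wedge d\omega = 0$. By the pointwise content of the first part, the image of $v \mapsto \iota_v \omega$ coincides with $\langle \omega_1, \dots, \omega_p \rangle$, so $(\iota_v \omega) \wedge d\omega = 0$ for every $v$ is equivalent to $\omega_i \wedge d\omega = 0$ for each $i$, and hence to Frobenius integrability.

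The main obstacle is the careful translation between the global polynomial statement and the fiberwise picture on $\pn$: one must check that constant $(p-1)$-vectors $v \in \extp^{p-1} V$ suffice to detect all pointwise Plücker relations. This follows from the surjection $V \otimes \opn(-1) \onto \tpn$ in the Euler sequence, which induces pointwise surjections $\extp^{p-1} V \to \extp^{p-1} (T_x \pn)$ at each $x$, up to the appropriate twist. With that reduction in hand, both parts are classical.
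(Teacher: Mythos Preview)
Your argument is correct and covers both directions of the characterization, but it follows a different path from the one the paper sketches. The paper indicates an \emph{iterated application of the de~Rham--Saito division lemma} after localizing to $D_+(f)$ for $f$ a coefficient of $\omega$: on such an affine open one picks $v$ with $\iota_v\omega$ a unit times a $1$-form $\alpha$, uses $(\iota_v\omega)\wedge\omega=0$ and de~Rham--Saito to write $\omega=\alpha\wedge\omega'$ over the ring $R_f$, and then repeats with $\omega'$; the integrability clause is handled by the same division lemma applied to $d\omega$. Your route instead reduces to fibers via the Euler sequence, invokes the pointwise Pl\"ucker criterion for decomposability, and then appeals to the classical Frobenius theorem.

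The trade-off is this: the de~Rham--Saito approach is purely algebraic and directly produces the $1$-forms $\omega_1,\dots,\omega_p$ as \emph{regular sections} on $D_+(f)$, with no passage through individual points. Your approach is more elementary---just linear algebra plus Frobenius---but there is one small step you should make explicit: pointwise decomposability at every $x\notin\sing(\omega)$ does not by itself yield a local factorization $\omega=\omega_1\wedge\cdots\wedge\omega_p$ into \emph{sections}. You essentially have what is needed (the image of $v\mapsto\iota_v\omega$ is a rank-$p$ subbundle of $\Omega^1$ off $\sing(\omega)$, and any local frame of it gives the $\omega_i$), and you implicitly use this in the integrability half; it would be cleaner to state it once in the LDS half before invoking Frobenius.
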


The proof is an iterated application of de Rham-Saito Division Lemma \cite{Saito} after localizing to the principal open subset $D_+(f)\subset \pn$, for $f$ a coefficient of $\omega$, and we leave it to the reader.

Given an LDS $p$-form $\omega$ defining a distribution $\sD$, we want to compute its tangent and normal sheaves. To do so, we analyze a suitable complex of sheaves associated with $\omega$. This was observed in \cite[p.13]{CCJ} for codimension $p=1$ and the general case is similar. Taking exterior powers of the Euler sequence \eqref{seq:eulerdual} we get a natural inclusion $\Omega_\pn^{p-1}(d+p+1) \hookrightarrow \opn(d+2)\otimes \extp^{p-1}V^\ast$. On the other hand, as in \eqref{seq:euler}, $\tpn$ is the cokernel of $\rad \colon \opn \rightarrow \opn(1)\otimes V$, induced by the radial vector field. Hence, we consider the composition 
\[
C_\omega\colon \opn(1)\otimes V \twoheadrightarrow \tpn \stackrel{\omega}{\longrightarrow} \Omega_\pn^{p-1}(d+p+1)  \hookrightarrow \opn(d+2)\otimes \extp^{p-1}V^\ast.
\]
Note that $\nD$ is isomorphic to the image of $C_\omega$ and we also get:
\begin{equation}\label{eq:complex1}
    \opn \stackrel{\rad }{\longrightarrow} \opn(1)\otimes V \stackrel{C_\omega}{\longrightarrow} \opn(d+2)\otimes \extp^{p-1}V^\ast.
\end{equation}
This complex is interesting because the associated complex of free $R$-modules is computationally convenient to describe $\tD$. The following is straightforward.

\begin{lema}\label{lem:tangcoh}
Let $\sD$ be a codimension-$p$ distribution on $\pn$ of degree $d$ given by a homogeneous $p$-form $\omega$. Then $\tD$ is the middle cohomology of the complex \eqref{eq:complex1} and $\nD$ is the image of $C_\omega$.
\end{lema}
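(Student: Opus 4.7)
The plan is to chase the definition of $C_\omega$ through the Euler sequence and the distribution sequence \eqref{seq:distkpn}, and identify image and middle cohomology directly.

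First, I would set up notation: let $q\colon \opn(1)\otimes V \twoheadrightarrow \tpn$ denote the Euler surjection from \eqref{seq:euler}, so that $\ker(q) = \img(\rad) \cong \opn$; and let $\iota\colon \Omega_\pn^{p-1}(d+p+1) \hookrightarrow \opn(d+2)\otimes \extp^{p-1}V^\ast$ be the inclusion coming from the twist of \eqref{seq:kforms} (with $p$ replaced by $p-1$). By the very construction recalled above the statement, $C_\omega = \iota \circ \omega \circ q$, where $\omega\colon \tpn \to \Omega^{p-1}_\pn(d+p+1)$ denotes the contraction $v\mapsto \iota_v\omega$. This is precisely the map $\psi$ of \eqref{seq:distkpn}, since the distribution is defined by $\omega$.

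For the image, I would argue: since $q$ is surjective and $\iota$ is injective, $\img(C_\omega) = \iota(\img(\omega\circ q)) = \iota(\img(\psi))$, and $\img(\psi)=\nD$ by \eqref{seq:distkpn}; under $\iota$ this is identified with the copy of $\nD$ sitting inside $\opn(d+2)\otimes \extp^{p-1}V^\ast$, proving the second assertion.

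For the middle cohomology, I compute $\ker(C_\omega)$. Because $\iota$ is injective, $\ker(C_\omega) = \ker(\omega\circ q) = q^{-1}(\ker\psi) = q^{-1}(\tD)$. Since $q$ is surjective with kernel $\img(\rad)$, restricting $q$ to $q^{-1}(\tD)$ yields a short exact sequence
\[
0 \longrightarrow \img(\rad) \longrightarrow \ker(C_\omega) \stackrel{q}{\longrightarrow} \tD \longrightarrow 0,
\]
hence $\ker(C_\omega)/\img(\rad) \cong \tD$, which is exactly the middle cohomology of the complex \eqref{eq:complex1}.

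There is essentially no obstacle here: the argument is pure diagram chasing, and the only thing to be careful about is matching the contraction map $\omega\colon\tpn\to \Omega^{p-1}_\pn(d+p+1)$ with the quotient map $\psi$ of the distribution sequence, which is built into the definition of $\sD$. If one wished, the conclusion could also be packaged as a single application of the snake lemma to the diagram comparing the Euler sequence with the distribution sequence pulled back along $q$.
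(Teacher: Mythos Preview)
Your argument is correct and is precisely the straightforward diagram chase the paper has in mind; indeed the paper does not spell out a proof at all, simply declaring the lemma ``straightforward'' after introducing $C_\omega$ as the composition $\iota\circ\omega\circ q$. Your identification of $\psi$ with the contraction map and the short exact sequence $0\to\img(\rad)\to\ker(C_\omega)\to\tD\to 0$ is exactly what is needed.
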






\section{Forms with prescribed vanishing locus}

Now we turn to the main objective of this work, which is to describe the module of homogeneous $p$-forms, not necessarily LDS, vanishing along some given subscheme. To describe distributions, one may further apply Lemma \ref{def:LDS}.   

Let $Z\subset \pn$ be a closed subscheme with ideal sheaf $\sI_Z$ and consider 
\[
\mathcal{A}^p(Z) := \bigoplus_{d\geq 0}H^0(\Omega_\pn^p(d+p+1)\otimes \sI_Z).
\]
the $R$-module of twisted differential $p$-forms that vanish on $Z$. Let $I_Z$ denote the saturated homogeneous ideal of $Z$, i.e., $I_Z := \bigoplus_j H^0(\sI_Z(j)) \subset R$.


\begin{prop}\label{prop:syzkforms0}
Let $Z \subset \mathbb{P}^n$ be a closed subscheme then
\begin{equation}\label{seq:diffmodule}
    0 \longrightarrow I_Z \otimes \iota_{\rad} \Omega^{p+1}_{R} \longrightarrow \mathcal{A}^p(Z) \longrightarrow   \Tor_p ^R(I_Z,\K)(p+1) \longrightarrow 0.
\end{equation}
\end{prop}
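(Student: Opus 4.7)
The plan is to describe $\mathcal{A}^p(Z)$ algebraically as the kernel of an $\iota_{\rad}$-map and then recognize the sequence as the tautological ``kernel mod image equals homology'' extracted from a Koszul-type complex that computes $\Tor^R_\bullet(I_Z,\K)$.

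\textbf{Step 1. Algebraic description of $\mathcal{A}^p(Z)$.} Take the exterior-power Euler sequence \eqref{seq:kforms}
\[
0 \longrightarrow \omn{p} \longrightarrow \opn(-p)\otimes \textstyle\bigwedge^p V^* \stackrel{\iota_{\rad}}{\longrightarrow} \omn{p-1} \longrightarrow 0,
\]
tensor with $\sI_Z(d+p+1)$ (exact since all terms are locally free), and pass to global sections. Because $I_Z$ is saturated one has $H^0(\sI_Z(k)) = (I_Z)_k$, and $\bigwedge^p V^*$ is a trivial bundle, so $\mathcal{A}^p(Z)_d = H^0(\Omega^p_\pn(d+p+1)\otimes \sI_Z)$ sits as the kernel of a map $(I_Z)_{d+1}\otimes \bigwedge^p V^* \to H^0(\Omega^{p-1}_\pn(d+p+1)\otimes\sI_Z)$, and the latter embeds via the same Euler sequence into $(I_Z)_{d+2}\otimes \bigwedge^{p-1}V^*$. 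On the ambient free modules the map is literally contraction with $\rad$. Assigning $\deg(dx_i)=1$, one obtains the identification of graded $R$-modules
\[
\mathcal{A}^p(Z)(-p-1) \;\cong\; \ker\!\Bigl(\iota_{\rad}\colon I_Z \otimes_\K \textstyle\bigwedge^p V^* \longrightarrow I_Z \otimes_\K \bigwedge^{p-1} V^*\Bigr).
\]

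\textbf{Step 2. Koszul resolution of $\K$.} The complex $(R\otimes_\K \bigwedge^\bullet V^*,\iota_{\rad})$ is the Koszul complex on the regular sequence $x_0,\dots,x_n$ and is therefore a graded free resolution of $\K=R/(x_0,\dots,x_n)$. Consequently $\Tor^R_p(I_Z,\K)$ is the $p$-th homology of the complex
\[
\cdots \longrightarrow I_Z\otimes \textstyle\bigwedge^{p+1}V^* \stackrel{\iota_{\rad}}{\longrightarrow} I_Z\otimes \bigwedge^{p}V^* \stackrel{\iota_{\rad}}{\longrightarrow} I_Z\otimes \bigwedge^{p-1}V^* \longrightarrow \cdots.
\]

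\textbf{Step 3. Assembling \eqref{seq:diffmodule}.} At the $p$-th spot of this Koszul-type complex one has the tautological short exact sequence
\[
0 \longrightarrow \img\iota_{\rad} \longrightarrow \ker\iota_{\rad} \longrightarrow \Tor^R_p(I_Z,\K) \longrightarrow 0.
\]
Shifting by $(p+1)$ and using Step~1 replaces the middle term by $\mathcal{A}^p(Z)$ and the rightmost term by $\Tor^R_p(I_Z,\K)(p+1)$. It remains to identify the image term with $I_Z\otimes \iota_{\rad}\Omega^{p+1}_R$: the relation $\iota_{\rad}(f\otimes \eta) = f\cdot \iota_{\rad}\eta$ for $f\in I_Z$, $\eta\in \bigwedge^{p+1}V^*$, together with $R$-linearity, shows that $\img\bigl(\iota_{\rad}\colon I_Z\otimes\bigwedge^{p+1}V^*\to I_Z\otimes\bigwedge^p V^*\bigr)$ coincides, after the shift, with $I_Z\cdot \iota_{\rad}\Omega^{p+1}_R$, which is precisely the image of the natural map $I_Z\otimes_R \iota_{\rad}\Omega^{p+1}_R \to \mathcal{A}^p(Z)$.

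\textbf{Main obstacle.} The conceptual core is the standard fact that Koszul homology computes $\Tor$ against the residue field; the subtlety is purely bookkeeping. One has to keep the twist $(p+1)$ straight throughout, match the sheaf-theoretic definition of $\mathcal{A}^p(Z)$ with the algebraic kernel (which relies on the saturation of $I_Z$ so that $H^0(\sI_Z(k)) = (I_Z)_k$), and make precise the notation $I_Z\otimes \iota_{\rad}\Omega^{p+1}_R$ so that it represents the image inside $\mathcal{A}^p(Z)$. Once these identifications are in place, the exactness of \eqref{seq:diffmodule} is immediate.
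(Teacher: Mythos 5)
Your argument is correct, and it reaches \eqref{seq:diffmodule} by a genuinely different (more self-contained) route than the paper. The paper twists the $(p+1)$-st exterior power of the Euler sequence by $\sI_Z(d+p+1)$, passes to the long exact sequence in sheaf cohomology, and identifies the image of the connecting map $H^0(\Omega^p_{\pn}(d+p+1)\otimes\sI_Z)\to H^1(\Omega^{p+1}_{\pn}(d+p+1)\otimes\sI_Z)$ with $\Tor_p^R(I_Z,\K)_{d+p+1}$ by citing an external result (the Koszul-cohomology description of graded $\Tor$); the subobject $I_Z\otimes\iota_{\rad}\Omega^{p+1}_R$ then appears as the image of the $H^0$-level contraction. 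You never leave $H^0$: left-exactness of global sections plus saturation of $I_Z$ identifies $\mathcal{A}^p(Z)$, up to the shift by $p+1$, with $\ker\bigl(\iota_{\rad}\colon I_Z\otimes_\K\bigwedge^{p}V^*\to I_Z\otimes_\K\bigwedge^{p-1}V^*\bigr)$, and then the observation that $(R\otimes_\K\bigwedge^{\bullet}V^*,\iota_{\rad})$ is the Koszul resolution of $\K$ turns the tautological kernel/image/homology sequence into \eqref{seq:diffmodule}, with the image of the Koszul differential visibly equal to $I_Z\cdot\iota_{\rad}\Omega^{p+1}_R$ by $R$-linearity of $\iota_{\rad}$. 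In effect you have inlined, in the graded-module setting, the proof of the theorem the paper cites; this buys a purely algebraic, reference-free argument that also makes the left-hand term of \eqref{seq:diffmodule} and the isomorphism $\mathcal{A}^p(Z)_d\cong\Tor_p^R(I_Z,\K)_{d+p+1}$ when $(I_Z)_d=0$ immediate, while the paper's sheaf-cohomological phrasing is shorter given the reference and sits closer to the geometric language used in the rest of the article. The only bookkeeping point worth adding is that the kernel and the $\Tor$ module vanish in internal degrees $\leq p$ (coefficients would be constants in $I_Z$, and the minimal resolution of $I_Z$ has $F_p$ generated in degrees $\geq p+1$), so restricting to the summands $d\geq 0$ in the definition of $\mathcal{A}^p(Z)$ loses nothing.
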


\begin{proof}
Consider the $(p+1)$-st exterior power of Euler sequence tensored with the sheaf $\sI_Z(d+p+1)$:
\[
0 \longrightarrow \Omega^{p+1}_{\mathbb{P}^n}(d+p+1) \otimes \sI_Z \longrightarrow \sI_Z(d)\otimes \extp^{p+1}V^\ast \stackrel{\iota_{\rad}}{\longrightarrow} \Omega^p_{\mathbb{P}^n}(d+p+1) \otimes \sI_Z \longrightarrow 0,
\]
which is exact since $\Omega^p_{\mathbb{P}^n}$ is locally free. From the long sequence of cohomology, we get
\[
H^0(\sI_Z(d))\otimes \extp^{p+1}V^\ast \stackrel{\iota_{\rad}}{\longrightarrow} H^0( \Omega^p_{\mathbb{P}^n}(d+p+1) \otimes \sI_Z) \stackrel{\phi}{\longrightarrow} H^1( \Omega^{p+1}_{\mathbb{P}^n}(d+p+1) \otimes \sI_Z) 
\]
is exact. From \cite[Theorem 5.8]{EI:SYZ5} the image of $\phi$ is precisely $\Tor^R_p (I_Z, \K)_{d+p+1}$. Taking the direct sum over $d\geq 0$, we get the desired sequence of $R$-modules. Note that the image of $\iota_{\rad}$ after the sum is isomorphic to $I_Z \otimes \iota_{\rad} \Omega^{p+1}_{R}$.
\end{proof}

In most cases of interest, one wants to describe degree $d$ distributions singular along a $Z$ such that $H^0(\sI_Z(d))= 0$ so that
\[
H^0( \Omega^p_{\mathbb{P}^n}(d+p+1) \otimes \sI_Z) \simeq \Tor^R_p (I_Z, \K)_{d+p+1}.
\]
It is an interesting open question to decide whether $H^0(\sI_Z(d))= 0$ holds for $Z = \sing(\sD)$. This is true, for instance, if $k=1$ and $\dim Z = 0$, see \cite[Lemma 4.2]{CJM}. 

\subsection{\texorpdfstring{$p$}{p}-forms and syzygies}

Note that the sequence of graded $\K$-vector spaces underlying \eqref{seq:diffmodule} must split, and we derive such a splitting from the syzygies of $I_Z$. Consider the minimal graded free resolution 
\begin{equation}\label{seq:resideal}
    0 \longrightarrow F_n \stackrel{\phi_n}{\longrightarrow} F_{n-1} \stackrel{\phi_{n-1}}{\longrightarrow} \dots \longrightarrow F_0  \stackrel{\phi_0}{\longrightarrow} I_Z \longrightarrow 0,
\end{equation}
where $F_i = \bigoplus_j R(-j)^{\beta_{i,j}}$. Recall that, since the resolution is minimal,
\begin{equation}\label{eq:tor}
    \Tor^R_p (I_Z, \K) \simeq F_p \otimes \K = \bigoplus_j \K^{\beta_{p,j}},
\end{equation}
with $\K^{\beta_{p,j}}$ in degree $j$. Moreover, fixed the minimal generators given by $\phi_0$, the module of $p$-th syzygies of $I_Z$ is the image of $\phi_p $. Note that if we tensor \eqref{seq:resideal} with the free module $\Omega^{l}_{R}$ we get an exact sequence 
\begin{equation}
    0 \longrightarrow F_n\otimes \Omega^{l}_{R} \stackrel{\phi_n}{\longrightarrow} F_{n-1}\otimes \Omega^{l}_{R} \stackrel{\phi_{n-1}}{\longrightarrow} \dots \longrightarrow F_0 \otimes \Omega^{l}_{R} \stackrel{\phi_0}{\longrightarrow} I_Z\otimes \Omega^{l}_{R} \longrightarrow 0
\end{equation}
where $\phi_j = \phi_j \otimes 1$ by abuse of notation. We then define a $\K$-linear map $\delta \colon \Omega^p_{R} \rightarrow \Omega^{p+1}_{R}$ by setting 
\[
\delta \omega = \frac{d\omega}{\deg \omega},
\]
on a homogeneous $\omega$. Here $\deg \omega$ is the total degree of $\omega$ considering $\deg dx_i = \deg x_i = 1$. The important property $\delta$ has is that 
\[
\iota_{\rad} \delta \omega = \omega, \quad \text{for} \quad \omega \in \ker \iota_{\rad}.
\]
Given a matrix of $p$-forms $G = (g_{ij})$ we denote $\delta G = (\delta g_{ij})$ and similarly for $\iota_{\rad}$; we use the dot $\cdot$ to denote matrix multiplication, whether the entries are commutative or not. 

To construct a $1$-form that vanishes on $Z$, we take $t \in \Tor_1^R(I_Z,\K) \cong \bigoplus_j \K^{\beta_{1,j}}$, which we regard as a column vector of elements of $k$ with the appropriate grading. The matrix $\phi_0$ is a row vector of (minimal) generators of $I_Z$ and the columns of $\phi_1$ are the first syzygies; in particular, $\phi_1 t$ is a first syzygy. Then we apply $\delta$ and multiply the matrices: $\xi_1(t):= \phi_0\cdot \delta \phi_1 \cdot t$. It vanishes on $Z$ since the coefficients belong to $I_Z$, and it may descend to the projective space since 
\[
\iota_{\rad} \xi_1(t) = \iota_{\rad} (\phi_0\cdot \delta \phi_1 \cdot t) = \phi_0\cdot \phi_1 \cdot t = 0
\]
by the above relation; by convention $\iota_{\rad}F= 0$ for any polynomial $F$. Note, however, that $\xi_1(t)$ is only homogeneous if $t\in \Tor_1^R(I_Z,\K)$ is homogeneous, i.e., it has nonzero entries in only one degree.
For $2$-forms, the procedure is similar, take $t\in \Tor_2^R(I_Z,\K)$ and define $\xi_2(t) := \phi_0\cdot \delta( \phi_1 \cdot \delta\phi_2) \cdot t$. Notice that we differentiate the matrix $\phi_2$, multiply the result by $\phi_1$, then differentiate the product. Following the same strategy, we construct $p$-forms vanishing on $Z$ with the following proposition. Recall that one can also produce $p$-forms from $I_Z \otimes \iota_{\rad}\Omega^{p+1}_{R}$, i.e., as a combination $\eta = \sum F_i\eta_i$ with $F_i\in I_Z$ and $\iota_{\rad}\eta_i = 0$. The forms we obtain via the above procedure are not of this type. 

\begin{prop}\label{prop:splitting}
The $k$-linear morphism $\xi_{p} \colon \Tor_p^R(I_Z,\K) \rightarrow \mathcal{A}^p(Z)$ defined by 
\[
\xi_{p}(t) =  (\phi_0 \circ \delta  \circ \phi_1 \circ \cdots \circ  \delta  \circ \phi_p ) \cdot t,
\]
(alternating $\delta$ and multiplication by $\phi_i$) is injective, and its image does not intersect the image of $I_Z \otimes \iota_{\rad}\Omega^{p+1}_{R}$. 
\end{prop}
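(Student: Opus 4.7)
The plan is to reduce both claims to a single statement: that the composition of $\xi_{p}$ with the surjection $\mathcal{A}^p(Z) \twoheadrightarrow \Tor_p^R(I_Z,\K)(p+1)$ of \eqref{seq:diffmodule} is the identity. Granting this, $\xi_{p}$ is automatically injective, and its image meets the kernel $I_Z \otimes \iota_{\rad}\Omega^{p+1}_R$ only in zero.

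The first preliminary is to verify that $\xi_{p}(t)$ really belongs to $\mathcal{A}^p(Z)$. I would introduce the iterated notation $\omega_p := \phi_p \cdot t$ and $\omega_{k-1} := \phi_{k-1} \cdot \delta \omega_k$, so that $\xi_{p}(t) = \omega_0$. The condition that the coefficients lie in $I_Z$ is immediate from the outermost multiplication by $\phi_0$. For $\iota_{\rad}\omega_0 = 0$, I would use the Cartan-type identity
\[
\iota_{\rad}\delta\omega + \delta \iota_{\rad}\omega = \omega,
\]
valid on every homogeneous $\omega$ (and extending the identity $\iota_{\rad}\delta\omega = \omega$ on $\ker\iota_{\rad}$ already noted in the preliminaries), together with the $R$-linearity of $\iota_{\rad}$ and the complex condition $\phi_{k-1}\phi_k = 0$. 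This yields the recursion $\iota_{\rad}\omega_{k-1} = -\phi_{k-1}\cdot \delta \iota_{\rad}\omega_k$, and a descending induction from the base case $\iota_{\rad}\omega_p = 0$ (since $\omega_p$ is a column of $0$-forms) gives $\iota_{\rad}\omega_0 = 0$.

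For the core step, I would unpack the connecting homomorphism $\phi$ appearing in the proof of Proposition \ref{prop:syzkforms0}. The Cartan identity above shows that, whenever $\iota_{\rad}\omega = 0$, the form $\delta\omega$ provides a natural lift of $\omega$ through $\iota_{\rad}\colon \sI_Z(d) \otimes \bigwedge^{p+1}V^\ast \to \Omega^p_{\pn}(d+p+1)\otimes\sI_Z$; hence the nested construction of $\xi_{p}$ mimics, step by step, the local-lift construction of the boundary. The Eisenbud--Ein identification \cite[Theorem~5.8]{EI:SYZ5} of $\img(\phi)$ with $\Tor_p^R(I_Z,\K)_{d+p+1}$ is itself produced by a chain-level Koszul construction along the minimal free resolution, and I expect $\xi_{p}$ to be exactly the inverse of this identification, composed with the surjection $F_p \twoheadrightarrow F_p \otimes \K$.

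The main obstacle is making this last matching rigorous: one must align the explicit Eisenbud--Ein formula with the nested composition $\phi_0 \circ \delta \circ \phi_1 \circ \cdots \circ \delta \circ \phi_p$, while keeping careful track of Koszul signs and of the rational factors introduced by $\delta = d/\deg$. Once that bookkeeping is done, sending $t$ through $\xi_{p}$ and then projecting to $\Tor_p^R(I_Z,\K)(p+1)$ recovers $t$, which closes both parts of the proposition.
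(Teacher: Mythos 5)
Your preliminary step is fine: the Cartan-type identity $\iota_{\rad}\delta\omega + \delta\iota_{\rad}\omega = \omega$ on homogeneous forms is correct, and your descending induction on $\omega_k$ does show $\iota_{\rad}\xi_p(t)=0$ and that the coefficients lie in $I_Z$, i.e.\ $\xi_p(t)\in\mathcal{A}^p(Z)$. This matches the first line of the intended argument. The logical reduction is also sound: if the composite of $\xi_p$ with the surjection $\mathcal{A}^p(Z)\to\Tor_p^R(I_Z,\K)(p+1)$ of \eqref{seq:diffmodule} were the identity (or merely injective), both claims of the proposition would follow.

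The genuine gap is that this composite is never computed. Your core step is the assertion that $\xi_p$ ``mimics, step by step, the local-lift construction of the boundary'' and that you ``expect'' it to invert the identification of $\img(\phi)$ with $\Tor_p^R(I_Z,\K)_{d+p+1}$ from \cite{EI:SYZ5}; you then name the alignment of $\xi_p$ with that identification (signs, the rational factors in $\delta$, the Koszul bookkeeping) as ``the main obstacle'' and stop there. But that alignment \emph{is} the whole content of the proposition: without it you have shown only that $\xi_p$ lands in $\mathcal{A}^p(Z)$, not that it is injective nor that its image avoids $I_Z\otimes\iota_{\rad}\Omega^{p+1}_R$. Note also that with the conventions of \cite{EI:SYZ5} the composite need not be literally the identity (a priori only a degreewise automorphism, up to signs and scalars), so even the target of your plan may need restating; what you actually need is its injectivity, and that too is unproven here. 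A way to close the gap without ever unwinding the connecting homomorphism is to argue directly at the level of the resolution: suppose $\xi_p(t)=\phi_0\cdot\iota_{\rad}\eta_0$ for some column $\eta_0$ of $(p+1)$-forms, use exactness of \eqref{seq:resideal} tensored with the free modules $\Omega^l_R$ to factor successively through $\phi_1,\dots,\phi_p$, applying $\iota_{\rad}$ at each stage, until you reach $t-\iota_{\rad}\eta_p=\phi_{p+1}\cdot A$; minimality of the resolution (all entries of $\phi_{p+1}$ of positive degree) together with the fact that $t$ has constant entries then forces $t=0$, which yields both claims at once.
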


\begin{proof}
First, note that, due to $R$-linearity, $\iota_{\rad} (\phi_j \cdot \delta \phi_{j+1}) = \phi_j \cdot \phi_{j+1} = 0$. It is then straightforward to show that $\iota_{\rad} \xi_{p}(t) = 0$.

We will assume by contradiction that $\xi_{p}(t)$ belongs to the image of $I_Z \otimes \iota_{\rad}\Omega^{p}_{R}$ and conclude that $t=0$, hence proving both claims at once. From the assumption there exists $\eta_0$ a column vector of $(p+1)$-forms such that
\[
 \xi_{p}(t) = \phi_0 \cdot \iota_{\rad}\eta_0. 
\]
Hence 
\[
\phi_0 \cdot \left( \delta (\phi_1  \circ \cdots \circ  \delta  \circ \phi_p)\cdot  t - \iota_{\rad}\eta_0 \right) = 0
\]
and, due to the exactness of \eqref{seq:resideal} twisted by $\Omega^p_{R}$, there exists $\eta_1$ a column vector of $p$-forms such that 
\[
\delta (\phi_1  \circ \cdots \circ  \delta  \circ \phi_p)\cdot  t - \iota_{\rad} \eta_0 = \phi_1 \cdot \eta_1.
\]
Applying $\iota_{\rad}$ we get, due to $R$-linearity,
\[
 \phi_1 \cdot \delta  (\phi_2 \circ  \cdots \circ  \delta  \circ \phi_p)\cdot  t  = \phi_1 \cdot \iota_{\rad} \eta_1 \Longrightarrow  \phi_1 \cdot \left( \delta  (\phi_2 \circ  \cdots \circ  \delta  \circ \phi_p)\cdot  t   - \iota_{\rad} \eta_1 \right) = 0.
\]
Thus there exists $\eta_2$, a column vector of $(p-1)$-forms, such that $\delta (\cdots \delta \phi_p ))\cdot  t  - \iota_{\rad} \eta_1 = \phi_2 \cdot \eta_2$. Iterating this process we arrive at
\[
\delta \phi_p  \cdot t - \iota_{\rad} \eta_{p-1}  = \phi_p  \cdot \eta_p  
\]
where $\eta_p $ is a column vector of $1$-forms. Hence, there exists a matrix of polynomials $A$ such that 
\[
t - \iota_{\rad}\eta_p   = \phi_{p+1} \cdot A. 
\]
Since the resolution \eqref{seq:resideal} is minimal, each entry of $\phi_{p+1}$ is a homogeneous polynomial; the same is true for $\iota_{\rad}\eta_p $. On the other hand, the entries of $t\in \Tor_p ^R(I_Z,\K)$ are constants. Thus, comparing degrees, we see that $t=0$.

\end{proof}

Note that from \eqref{seq:diffmodule} we expect that $f\xi_{p}(t) \in I_Z \otimes \iota_{\rad}\Omega^{p}_{R}$, for any homogeneous polynomial $f \in R$. Indeed, we can write
\[
f\xi_{p}(t) =  \phi_0 \cdot \iota_{\rad}\left(\delta f\wedge  \delta (\phi_1 \cdot \delta (\cdots \delta \phi_p ))\cdot  t\right).
\]
Also we have that $\xi_{p}(t)$ is not homogeneous unless $t\in \Tor^R_p (I_Z, \K)_m$ for some $m$; in this case the total degree of $\xi_{p}(t)$ is $m$. Moreover, if $h^0(\sI_Z(d)) = 0$ we can pass to the first linear strand of \eqref{seq:resideal}: 
\[
 0 \longrightarrow F_n^0 \stackrel{\phi_n^0}{\longrightarrow} F_{n-1}^0 \stackrel{\phi_{n-1}^0}{\longrightarrow} \dots \longrightarrow F_0^0  \stackrel{\phi_0^0}{\longrightarrow} I_Z 
\]
where $F_j^0 = R(-j-d-1)^{\beta_{j,j+d+1}}$ and $\phi_j^0$ are the corresponding linear blocks. Thus, $H^0( \Omega^p_{\mathbb{P}^n}(d+p+1) \otimes \sI_Z) \simeq \Tor^R_p (I_Z, \K)_{d+p+1}$ may be computed from
\[
\xi_p^0(t) = \phi_0^0 \cdot d\phi_1^0 \wedge \dots \wedge d\phi_p ^0 \cdot t,
\]
which involves only the degree $d+1$ generators of $I_Z$. 


\section{Examples}
In this section, we compute some examples. We will focus on degree $d$ distributions singular along $Z$ such that $(I_Z)_d = 0$. This is expected to always hold for $Z$ the full singular scheme, see the introduction to \cite{GJM}.

\begin{exem}[$n= 2,p=1, d=1 $]
 Let us start with a simple example. Let $Z \subset \PP^2$ be a reduced subscheme of length $3$ not contained in a line. Then we may suppose 
\[
I_Z = (x_0,x_1) \cap (x_0,x_2) \cap (x_1,x_2) = (x_0x_1,x_0x_2,x_1x_2).
\]
The resolution is given by 
\[
0 \longrightarrow R^2 \xrightarrow{
\begin{pmatrix}
x_2 & 0 \\ -x_1 & x_1 \\ 0 & -x_0  
\end{pmatrix} 
} R^3 \xrightarrow{
\begin{pmatrix}
x_0x_1 & x_0x_2 & x_1x_2
\end{pmatrix}
} I_Z \longrightarrow 0
\]
Thus $H^0(\Omega_{\PP^2}^1(3)\otimes \sI_Z)\cong \K^2$ spanned by 
\[
\omega_1 = x_0x_1dx_2 - x_0x_2dx_1 \quad \text{and} \quad \omega_2 = x_0x_2dx_1 - x_1x_2dx_0.
\]
Note that both $\omega_1$ and $\omega_2$ vanish along a line and a point, but a general linear combination of them vanishes precisely at $Z$.    
\end{exem}

\begin{exem}[$n= 3,p=1, d=1 $] Also for $d=1$ consider $Z\subset \PP^3$ a twisted cubic:
\[
0 \longrightarrow R^2 \xrightarrow{
\begin{pmatrix}
x_0 & x_1 \\ x_1 & x_2 \\ x_2 & x_3
\end{pmatrix} 
} R^3 \xrightarrow{
\begin{pmatrix}
x_1x_3-x_2^2 & x_1x_2-x_0x_3 & x_0x_2-x_1^2
\end{pmatrix}
} I_Z \longrightarrow 0
\]
Then we also get $H^0(\Omega_{\PP^2}^1(3)\otimes \sI_Z)\cong \K^2$. A general element vanishes only on $Z$.
\end{exem}

\begin{exem}[$n= 3,p=1, d=1 $] Next, we describe a pathological example for codimension one and degree one on $\PP^3$. Consider $Z$ given by  
$$
I_Z=(x_0^2, x_1^2, x_0x_2, x_1 x_2, x_2^2 -x_0 x_1).
$$ 
It is a $0$-dimensional scheme of length of $5$ supported on a single point. It is a simple example of a point that is not a local complete intersection. Thus, there is no local $1$-form vanishing only on $Z$. Due to Theorem \ref{mainthm}, any $1$-form vanishing on $Z$ can be written as $\omega = Adx_0 + Bdx_1 +Cdx_2$ where
\begin{align*}
	A & = t_0x_1x_2 +t_2x_1^2 +t_3x_0x_2 +t_4(x_0x_1-x_2^2) \\
	B & = -t_0x_0x_2+t_1x_1x_2-t_2(x_0x_1-x_2^2)-t_4x_0^2 \\
	C & = -t_1x_1^2-t_2x_1x_2-t_3x_0^2 +t_4x_0x_2
\end{align*}
and $t_0, \dots, t_4 \in \mathbb{C}$. Note that $\omega$ does not depend on $x_3$ so it is a linear pullback of a $1$-form $\eta$ on $\mathbb{P}^2$ and the singular locus is thus a cone over the singular locus of $\eta$. Therefore, any $\omega$ vanishing on $Z$ must vanish along $3$ lines concurring at $Z_{\rm red}$.
\end{exem}

\begin{exem}[$n= 3,p=2, d=2 $]
Any $(n-1)$-form $\omega \in H^0(\Omega^{n-1}_{\PP^n}(d+n))$ can be written as $\omega = \iota_{\rad} \iota_v \, dx_0 \wedge \dots \wedge dx_n$ for some vector field $v \in H^0(\tpn(d-1))$; in particular, it is LDS. The distribution, actually the foliation, described by $\omega$ is often better described by $v$, and to get this vector field from $\omega$ we just note that $\iota_v \, dx_0 \wedge \dots \wedge dx_n = \frac{1}{d+n}d\omega$. If $v= \sum_{j=0}^n a_j\frac{\partial}{\partial x_j}$ then, the singular scheme is defined by the maximal minors of the matrix
\[
\begin{pmatrix}
    x_0 & \cdots & x_n \\ a_0 & \cdots & a_n
\end{pmatrix},
\]
which coincides with the ideal generated by the coefficients of $\omega$, up to saturation.

Now we specialize to $\PP^3$. In \cite{CJM:curves} foliations by curves on $\PP^3$ are studied with a special focus on those having locally free conormal sheaf $\nD^\vee$. Nonetheless, estimates on Chern classes predict a foliation of degree $2$ with conormal sheaf satisfying $c_1(\nD^\vee) = -5, c_2(\nD^\vee) = 9$ and $c_3(\nD^\vee) = 3$; here $c_3 > 0$ implies non-locally-free. Then  \cite[Theorem 4.1]{CJM:curves} translates it to predicting a foliation singular along $Z = C \cup P$ where $C$ is a curve of degree $2$ and genus $-2$, and $P$ is zero-dimensional of length $3$. Then consider, for instance,
\begin{align*}
    C & = V(x_0^2, x_0x_1, x_1^2, x_0(x_2^2-x_3^2) - x_1x_3x_2),\\
    P & = V(x_2-x_0, x_2+x_1,x_3) \cup V(x_1-x_0, x_2,x_3+x_0) \\ & \qquad \cup V(x_1-2x_0, x_2+x_0,x_3-x_0).
\end{align*}
Computing the syzygies we can construct, inside a space of dimension $4$, the vector field $v = \sum_{j=0}^3 a_j\frac{\partial}{\partial x_j}$ where 

\begin{align*}
    a_0 & = -4\,x_{0}^{2}-50\,x_{0}x_{1}+20\,x_{1}^{2},\\
    a_1 & = -40\,x_{0}^{2}+16\,x_{0}x_{1}-10\,x_{1}^{2},\\
    a_2 & = 40\,x_{0}^{2}-45\,x_{0}x_{1}+35\,x_{1}^{2}-4\,x_{0}x_{2}+50\,x_{1}x_{2}+30\,x_{0}x_{3},\\
    a_3 & = 50\,x_{0}^{2}+40\,x_{0}x_{1}-40\,x_{1}^{2}+30\,x_{0}x_{2}-4\,x_{0}x_{3}+20\,x_{1}x_{3}.\\
\end{align*}
To check that $v$ is singular precisely along $Z$, one may follow the Macaulay2 routine below.
\begin{lstlisting}[language=Macaulay2]
i1 : R =  QQ[x_0..x_3];
i2 : C = ideal(x_0^2, x_0*x_1, x_1^2, x_0*(x_2^2-x_3^2) - x_1*x_3*x_2);
o2 : Ideal of R
i3 : P = intersect(ideal(x_2-x_0, x_2+x_1,x_3),ideal(x_1-x_0, x_2,x_3+x_0),
     ideal(x_1-2*x_0, x_2+x_0,x_3-x_0));
o3 : Ideal of R
i4 : Z =  intersect(C,P);
o4 : Ideal of R
i5 : a0 = -4*x_0^2-50*x_0*x_1+20*x_1^2;
i6 : a1 = -40*x_0^2+16*x_0*x_1-10*x_1^2;
i7 : a2 = 40*x_0^2-45*x_0*x_1+35*x_1^2-4*x_0*x_2+50*x_1*x_2+30*x_0*x_3;
i8 : a3 = 50*x_0^2+40*x_0*x_1-40*x_1^2+30*x_0*x_2-4*x_0*x_3+20*x_1*x_3;
i9 : singD = saturate minors(2, matrix{{x_0,x_1,x_2,x_3},{a0,a1,a2,a3}});
o9 : Ideal of R
i10 : Z == singD
o10 = true
\end{lstlisting}
\end{exem}

\begin{exem}\label{ex:ch4}
In \cite{ACM-inst}, the authors provide a construction for instanton bundles $F$ on $\p3$ of charge $4$ as a twist of the kernel of a map $\om{1}{3}(1) \to\sI_{Z}(3)$, where $Z$ is the disjoint union of $5$ lines with no $5$-secant. In our notation, $F(-3) = \nD^\vee$ is the conormal sheaf of a degree-$3$ foliation by curves $\sD$. Next, we show how to provide explicit examples of such sheaves with the help of the ancillary file \verb|syz-k-forms.m2| (available at \url{https://github.com/alannmuniz/syz-k-forms.git}). 
\begin{lstlisting}[language=Macaulay2]
i1 : load "syz-k-forms.m2"
i2 : R = QQ[x_0..x_3];
i3 : C = dsLns(5,R); -- 5 random lines
o3 : Ideal of R
i4 : om = rOmg(2,3,C); -- random 2-form of degree 3 vanishing on C
                         1                  1
o4 : Matrix (R[dx ..dx ])  <-- (R[dx ..dx ])
                 0    3             0    3
i5 : C == sing om -- check C is the whole singular scheme
o5 = true
i6 : N = conSheaf om; -- compute the conormal sheaf
i7 : F = N(3);
i8 : chern F -- compute its Chern classes
o8 = (0, 4, 0)
o8 : Sequence
i9 : HH^1(F(-2))
o9 = 0
o9 : QQ-module
\end{lstlisting}
\end{exem}

\begin{exem}\label{ch5}
Similar to the previous example, if we set $C$ as the disjoint union of two double lines of genus $-3$, we get an instanton bundle $F$ of charge $5$.

\begin{lstlisting}[language=macaulay2]
i1 : load "syz-k-forms.m2"
i2 : R = QQ[x_0..x_3];
i3 : C1 = ideal(x_0^2, x_0*x_1, x_1^2, x_0*x_2^3 - x_1*x_3^3);
o3 : Ideal of R
i4 : C2 = ideal(x_2^2, x_2*x_3, x_3^2, x_2*x_0^3 - x_3*x_1^3);
o4 : Ideal of R
i5 : saturate(C1+C2) == R --check that they are disjoint
o5 = true
i6 : C = intersect(C1,C2);
o6 : Ideal of R
i7 : om = rOmg(2,3,C);
                         1                  1
o7 : Matrix (R[dx ..dx ])  <-- (R[dx ..dx ])
                 0    3             0    3
i8 : N = conSheaf om; -- compute the conormal sheaf
i9 : F = N(3);
i10 : chern F -- compute its Chern classes
o10 = (0, 5, 0)
o10 : Sequence
i11 : HH^1(F(-2))
o11 = 0
o11 : QQ-module
\end{lstlisting}
\end{exem}

 \bibliographystyle{abbrvurl}
 \bibliography{biblio}
\end{document}